\newtheorem{theorem}{Theorem}[section]
\newtheorem{lemma}[theorem]{Lemma}
\theoremstyle{remark}
\numberwithin{equation}{section}
\theoremstyle{definition}
\newcommand{\R}{\mathbb{R}}
\newcommand{\OmCh}{\overline{\Omega}}
\newcommand{\OmPu}{\Omega \setminus \{0\}}
\newcommand{\scal}[2]{\langle {#1} , {#2}\rangle}
\newcommand{\Sl}{\Sigma_{\lambda}}
\newcommand{\Smu}{\Sigma_{\mu}}
\newcommand{\SlPu}{\Sigma_{\lambda}\setminus \{0_{\lambda}\}}
\newcommand{\Spl}{\Sigma'_{\lambda}}
\newcommand{\Vla}{V_{\lambda}}
\begin{document}

\title[A symmetry result for polyharmonic problems]{A symmetry result for polyharmonic problems\\
with Navier conditions}\thanks{The authors are members of INdAM/GNAMPA.
The first and the third author are 
partially supported by the INdAM-GNAMPA Project 2018 ``Problemi di curvatura relativi ad operatori ellittico-degeneri''. The second author is supported by
the Australian Research Council Discovery Project 170104880 NEW ``Nonlocal
Equations at Work''. }

\author[S.\ Biagi]{Stefano Biagi}
\author[E.\ Valdinoci]{Enrico Valdinoci}
\author[E.\ Vecchi]{Eugenio Vecchi}

\address[S.\,Biagi]{Dipartimento di Ingegneria Industriale e Scienze Matematiche 
\newline\indent Universit\`a Politecnica della Marche \newline\indent
Via Brecce Bianche, 60131, Ancona}
\email{s.biagi@dipmat.univpm.it}

\address[E.\,Valdinoci]{Department of Mathematics and Statistics
\newline\indent University of Western Australia \newline\indent
35 Stirling Highway, WA 6009 Crawley, Australia}
\email{enrico.valdinoci@uwa.edu.au}

\address[E.\,Vecchi]{Dipartimento di Matematica ``Guido Castelnuovo''\newline\indent
	Sapienza Universit\`a di Roma \newline\indent
	P.le Aldo Moro 5, 00185, Roma, Italy}
\email{vecchi@mat.uniroma1.it}

\subjclass[2010]{35J40
, 35J47
, 31B30
, 35B06
}

\keywords{Polyharmonic operator, Navier boundary conditions, moving plane method, 
symmetry of solutions.}

\date{\today}

\begin{abstract}
We consider an elliptic polyharmonic problem of any order
which takes place in a punctured bounded domain with Navier conditions.
We prove that if the domain is convex in one direction
and symmetric with respect to the reflections
induced by the normal hyperpane to such a direction,
then the solution is necessarily symmetric under this reflection
and monotone in the corresponding direction.
\end{abstract}

\maketitle



\section{Introduction}

\noindent Let $\Omega \subset \R^{n}$
(with $n\geq 2$) be a domain (i.e., open, bounded and connected set) 
satisfying the following \emph{structural assumptions}, 
which we assume to be satisfied throughout the paper:
\begin{itemize}
\item $\Omega$ is convex in the $x_1$-direction,
that is, if~$p=(p_1,\dots,p_n)$ and~$q=(q_1,\dots,q_n)$
belong to~$\Omega$ and~$p_j=q_j$ for any~$j\in\{2,\dots,n\}$,
then
$$\text{$(1-t)p+tq$ belongs to~$\Omega$ for all~$t\in[0,1]$};$$
\item $\Omega$ is symmetric
with respect to the hyperplane $\{x_1=0\}$,
that is, 
$$\text{if~$p=(p_1,p_2,\dots,p_n)\in\Omega$ then
$p_0:=(-p_1,p_2,\dots,p_n)\in\Omega$},$$
\item $0$ lies in $\Omega$. 
\end{itemize}
We point out that, since $\Omega$ is connected, the same is true of
$\Omega\setminus\{0\}$.
Let now $m\in{\mathbb{N}}$ be fixed and let $u:\OmPu \to \R$ be a classical solution of
the boundary value problem
\begin{equation}\label{PDE4}
\left\{ \begin{array}{rll}
           (-\Delta)^m u = f(u) & \textrm{in } \OmPu,\\[0.15cm]
	u>0 & \textrm{in } \OmPu,\\[0.15cm]
	u=- \Delta u = \ldots = (-\Delta)^{m-1}u= 0 & \textrm{on } \partial \Omega,\\[0.15cm]
			\displaystyle\inf_{\OmPu} (-\Delta)^j u > -\infty & j=1,\ldots,m-1.
					\end{array}\right.
\end{equation}
By classical solution we mean $u \in C^{2m}(\OmPu) \cap C^{2m-2}(\OmCh \setminus \{0\})$.
We notice that, if one aims to prove the \emph{existence}
of such a solution, some regularity on the boundary $\partial\Omega$ of $\Omega$ must be required
see e.g., \cite[Theorem 2.19]{GGS}.
On the other hand, we do not need to take any additional
assumption here, since we are assuming a priori that a solution
exists and we aim at proving its symmetry and monotonicity
properties.
\medskip

The main aim of this paper is to study symmetry properties of the 
positive solutions of~\eqref{PDE4}.
To this end we first observe that, if $u$ is such a solution and if we set
$$u_1 := u, \qquad u_{i+1} := (-\Delta)^i u \quad (\text{for $i = 1,\ldots,m-1$}),$$
then then vector-valued function $U = (u_1,\ldots,u_m)$ belongs
to $C^2(\Omega\setminus\{0\})\cap C(\overline{\Omega}\setminus\{0\})$
and satisfies the 
following second-order elliptic system
\begin{equation*}
\left\{ \begin{array}{rll}
     -\Delta u_i = u_{i+1} & \textrm{in } \OmPu, & i=1,\ldots,m-1,\\[0.12cm]
		 -\Delta u_m = f(u_1) & \textrm{in } \OmPu, & i=m,\\[0.12cm]
		 u_i = 0 & \textrm{on } \partial \Omega, & i=1,\ldots,m,\\[0.12cm]
		 u_1>0 & \textrm{in } \OmPu\\[0.12cm]
		\displaystyle\inf_{ \OmPu} u_i > -\infty,  & i= 2, \ldots, m. 
		  \end{array}\right.
\end{equation*}
In the present paper we assume that
\begin{equation}\label{f1} 
\tag{f1}
f \in \mathrm{Lip(\R^+)}, \quad f(0) \geq 0 \quad \textrm{and } \quad f \textrm{ is non-decreasing}.
\end{equation}
We note that condition~\eqref{f1}, together with the weak maximum principle in punctured domains 
(see~\cite[Lemma 2.1]{CLN2}), the lower-boundedness of the $u_i$'s
and the standard strong maximum principle, yields 
the positivity of the components $u_1,\ldots,u_m$ of $U$
in $\OmPu$. \medskip

We are now ready to state one of our main results.
\begin{theorem} \label{thm.main.pre}
Let $\Omega\subseteq\R^n$ be a domain satisfying the structural assumptions introduced
  above and let $f:\R^+\to\R$ satisfy~\eqref{f1}. Moreover,
  let $u\in C^{2m}(\Omega\setminus\{0\})\cap C^{2m-2}(\overline{\Omega}\setminus\{0\})$
  be a classical solution of the $2m$-th order boundary value problem~\eqref{PDE4}.
  
Then the following facts hold true:
  \begin{itemize}
   \item[{(1)}] $u$ is symmetric in $x_1$, i.e., $u(x_1,x_2,\ldots,x_n) = u(-x_1,x_2,\ldots,x_n)$
   for every $x\in\Omega$;\vspace*{0.15cm}
   \item[{(2)}]  $u$ is decreasing with respect to $x_1$ on $\Omega\cap\{x_1 > 0\}$.
  \end{itemize}
 \end{theorem}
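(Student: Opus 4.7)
The plan is to apply the moving plane method in the $x_1$-direction to the equivalent second-order cooperative cascade system already isolated in the excerpt, handling the puncture at the origin through the weak maximum principle in punctured domains of~\cite[Lemma 2.1]{CLN2}.

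\emph{Set-up.} For $\lambda\in(\lambda_0,0]$, with $\lambda_0:=\inf_{\Omega}x_1$, set $\Sl:=\Omega\cap\{x_1<\lambda\}$; for $x=(x_1,x')\in\Sl$ let $\xl:=(2\lambda-x_1,x')$ and $0_\lambda:=(2\lambda,0,\ldots,0)$. Since $\lambda\le 0$, the original puncture $0$ lies outside $\Sl$, so $0_\lambda$ is the only point to be excised. With $w_i^\lambda(x):=u_i(\xl)-u_i(x)$, the cascade equations and~\eqref{f1} give
\begin{equation*}
-\Delta w_i^\lambda = w_{i+1}^\lambda\ (i<m), \qquad -\Delta w_m^\lambda = c_\lambda(x)\,w_1^\lambda,
\end{equation*}
with $0\le c_\lambda\le\mathrm{Lip}(f)$. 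Moreover $w_i^\lambda\ge 0$ on the regular part of $\partial\Sl$ (from the Navier conditions on $\partial\Omega$ together with $w_i^\lambda\equiv 0$ on $\{x_1=\lambda\}\cap\Omega$), and each $w_i^\lambda$ is bounded below on $\Sl\setminus\{0_\lambda\}$, thanks to the lower bound built into~\eqref{PDE4}.

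\emph{Moving plane.} The goal is $w_i^\lambda\ge 0$ on $\Sl\setminus\{0_\lambda\}$ for every $i$ and every $\lambda\in(\lambda_0,0]$. For $\lambda$ just above $\lambda_0$ the measure $|\Sl|$ is small, so one runs the Berestycki--Nirenberg small-domain maximum principle iteratively through the cascade, first for $w_m^\lambda$ via $-\Delta w_m^\lambda=c_\lambda w_1^\lambda$ and then for $w_{m-1}^\lambda,\ldots,w_1^\lambda$, using~\cite[Lemma 2.1]{CLN2} to discount the puncture. Set $\lambda^*:=\sup\{\mu\le 0:w_i^\lambda\ge 0\text{ in }\Sl\setminus\{0_\lambda\}\text{ for every }i\text{ and every }\lambda\le\mu\}$; by continuity $w_i^{\lambda^*}\ge 0$. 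If $\lambda^*<0$, the strong maximum principle (again in its punctured version) propagates along the cascade and forces each $w_i^{\lambda^*}$ to be either strictly positive or identically zero in $\Sigma_{\lambda^*}\setminus\{0_{\lambda^*}\}$. The latter is ruled out because for $x\in\partial\Omega\cap\partial\Sigma_{\lambda^*}$ whose reflection $x_{\lambda^*}$ lies in the interior of $\Omega$ it would yield $0=w_i^{\lambda^*}(x)=u_i(x_{\lambda^*})$, contradicting the positivity of $u_i$ in $\OmPu$. Strict positivity then permits a small-cap argument in $\Sigma_{\lambda^*+\varepsilon}\setminus K$, where $K\subset\Sigma_{\lambda^*}\setminus\{0_{\lambda^*}\}$ is a compact set exhausting $\Sigma_{\lambda^*}$ up to a thin neighbourhood of $\{x_1=\lambda^*\}\cap\Omega$ and of $0_{\lambda^*}$, and on which $w_i^{\lambda^*}$ is bounded away from $0$. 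Thus $\lambda$ can be pushed past $\lambda^*$, contradicting maximality, so $\lambda^*=0$.

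\emph{Conclusion and main obstacle.} Collecting the above, $u(\xl)\ge u(x)$ for every $\lambda\in(\lambda_0,0]$ and $x\in\Sl$; running the same sliding from the right (which is legitimate because $\Omega$ is symmetric about $\{x_1=0\}$) yields the reverse inequality, hence symmetry~(1), while monotonicity~(2) follows from the strict positivity $w_1^\lambda>0$ for $\lambda\in(\lambda_0,0)$ obtained at the final step. The principal technical difficulty is that every invocation of a maximum principle must tolerate the puncture $0_\lambda$: the standard sliding has to be consistently coupled with~\cite[Lemma 2.1]{CLN2}, and the decoupling of the polyharmonic equation into a cascade is essential so that the sign propagation can be carried out equation by equation rather than at the level of $(-\Delta)^m$.
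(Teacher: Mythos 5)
Your overall strategy mirrors the paper's: reduce to the second-order cooperative cascade $U=(u_1,\ldots,u_m)$, run the moving plane in the $x_1$-direction (from the left, but by the symmetry of $\Omega$ this is equivalent), and excise the moving puncture via a Caffarelli--Li--Nirenberg type maximum principle. Two steps, however, do not hold as stated. First, the proposal to run the small-domain maximum principle ``iteratively through the cascade, first for $w_m^\lambda$ via $-\Delta w_m^\lambda = c_\lambda w_1^\lambda$ and then for $w_{m-1}^\lambda,\ldots,w_1^\lambda$'' is circular: the right-hand side of the equation for $w_m^\lambda$ contains $w_1^\lambda$, whose nonnegativity is exactly what you try to establish, and it is obtained only at the \emph{end} of your proposed iteration. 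The cascade is cyclic, not triangular, so it cannot be peeled off one scalar equation at a time. The paper avoids this by applying de Figueiredo's small-domain maximum principle for the \emph{coupled} cooperative system in one shot (Lemma~\ref{MaxSmall}). Second, \cite[Lemma 2.1]{CLN2} is a scalar statement; to discount the puncture for the whole vector $V_\lambda$ you need a system version, which is precisely what the paper proves as Lemma~\ref{PerG} by adding the barrier $h(x)=(-\ln|x|)^a$ to each component and controlling the zeroth-order coefficients $a_{ij}$ uniformly.

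A further gap is that the continuation step at $\lambda^*$ suppresses a necessary case distinction. Depending on the value of $\lambda^*$, the reflected singularity $0_{\lambda^*}$ may lie outside $\overline{\Omega}$ (no puncture at all), on $\partial\Omega$, or in the interior of $\Omega$; the paper treats these as Cases I, II, III, and the boundary case is genuinely delicate because the excised ball around $0_{\lambda^*}$ overlaps $\partial\Omega$ while the moving puncture $0_{\lambda^*\pm\varepsilon}$ drifts as the plane advances, which is why the paper constructs the disjoint sets $D_\delta$, $A_\delta$ and applies Lemma~\ref{PerG} on $\Omega\cap B_{\delta/2}(0_\mu)$ with the puncture placed at the \emph{current} reflected point, not at $0_\mu$. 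Your single ``compact set exhausting $\Sigma_{\lambda^*}$ up to thin neighbourhoods'' argument does not differentiate these regimes and, in particular, does not justify applying a punctured maximum principle on a piece of domain whose singular point sits arbitrarily close to $\partial\Omega$. On the positive side, your derivation of the monotonicity statement (2) directly from the strict inequality $u(x_\lambda)>u(x)$ for all $\lambda<0$, rather than via Hopf's Lemma as in the paper, is correct and slightly more elementary, and your observation that the decoupling into a second-order system is essential is exactly the paper's guiding idea.
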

 
\noindent As a byproduct of this result, we have that if~$\Omega$ is a ball,
then the solution~$u$ is necessarily radial and radially decreasing.
On the one hand,
when~$m=1$, our result recovers the classical result in~\cite{Serrin}.
On the other hand, when~$m=2$, problem~\eqref{PDE4}
finds na\-tu\-ral applications in engineering, for instance
in the description
of ``hinged'' rigid plates, see e.g.~\cite{GGS}.
Other examples of polyharmonic operators
naturally
appear in the phase separation of a two component system,
as described by the Cahn-Hilliard
equation (see~\cite{CH}),
and when comparing the pointwise values of a function with
its average (see~\cite{PIZ}). 

As is well-known, the literature concerning symmetry results for
elliptic PDEs is extremely wide, and is far from our scopes
to present here an exhaustive list of references.
We must mention the seminal papers 
\cite{Serrin, GNN, BN} for the use of the moving planes method
in the PDEs setting; we also highlight \cite{Terracini96, CLN2, Sciunzi17, MPS17, EFS18}
(for symmetry results for singular solutions of scalar semilinear equations
in local and non-local setting)
and \cite{Troy81, deFigueiredo, FGW, BGW08, DP, CoVe, CoVe2}
(for symmetry results for semilinear polyharmonic problems and cooperative
elliptic systems).

\medskip

In our framework, we will deduce Theorem~\ref{thm.main.pre}
as a particular case of a more general result, which is valid for
Pizzetti-type superpositions of polyharmonic operators, with
suitable structural assumptions on the coefficients. To this end,
given~$\alpha_1,\dots,\alpha_m\in\R$, we define
~$\alpha:=
(\alpha_1,\dots,\alpha_m)\in\R^m$, and we consider the
characteristic polynomial expansion
\begin{equation}\label{8A}
\det\left( \begin{matrix}\alpha_1+t & 0 &\dots &0\\
0& \alpha_2+t&\dots&0\\
\vdots& \vdots& \ddots&\vdots\\
0&0&\dots&\alpha_m+t
\end{matrix}
\right)=\sum_{k=0}^m s_k(\alpha)\,t^k,\end{equation}
where $s_m(\alpha) = 1$ (independently on $\alpha$) and, for every $k = 0,\ldots,m-1$, we have
\begin{equation}\label{8B}
s_k(\alpha) = \sum_{1\leq i_1 < \ldots < i_{m-k}\leq m}\alpha_{i_1}\cdots\alpha_{i_{m-k}}.\end{equation}
We stress that, by the
\emph{Descartes rule of signs}, 
the positivity of $s_0(\alpha),\ldots,s_{m-1}(\alpha)$ is equivalent
to the positivity of all $\alpha_i$; see also Lemma \ref{8}
for a self-contained proof. \medskip

\noindent Then, we consider the equation
\begin{equation}\label{PDE444}
\left\{ \begin{array}{rll}
    \sum_{k=0}^m s_k(\alpha)(-\Delta)^k u = f(u) & \textrm{in } \OmPu,\\[0.15cm]
	u>0 & \textrm{in } \OmPu,\\[0.15cm]
	u=- \Delta u = \ldots = (-\Delta)^{m-1}u= 0 & \textrm{on } \partial \Omega,\\[0.15cm]
			\displaystyle\inf_{\OmPu} (-\Delta)^j u > -\infty & j=1,\ldots,m-1.
					\end{array}\right.
\end{equation}
The symmetry and monotonicity result in this general setting goes as follows.

\begin{theorem} \label{thm.main}
Assume that~$s_0(\alpha),\dots,s_{k-1}(\alpha)\in[0,+\infty)$.
Let $\Omega\subseteq\R^n$ be a domain satisfying the
structural assumptions introduced
  above and let $f:\R^+\to\R$ satisfy~\eqref{f1}. Moreover,
  let $u\in C^{2m}(\Omega\setminus\{0\})\cap
  C^{2m-2}(\overline{\Omega}\setminus\{0\})$
  be a classical solution of the $2m$-th
  order boundary value problem~\eqref{PDE444}.
  
Then the following facts hold true:
  \begin{itemize}
   \item[{(1)}] $u$ is symmetric in $x_1$, i.e., $u(x_1,x_2,\ldots,x_n) = u(-x_1,x_2,\ldots,x_n)$
   for every $x\in\Omega$;\vspace*{0.15cm}
   \item[{(2)}]  $u$ is decreasing with respect to $x_1$ on $\Omega\cap\{x_1 > 0\}$.
  \end{itemize}
\end{theorem}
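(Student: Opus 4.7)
The plan is to factor the $2m$-th order operator in \eqref{PDE444} into a product of second-order operators, recast \eqref{PDE444} as a cooperative weakly coupled elliptic system, and then run the moving planes method on this system.

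\emph{Step 1: Factorization.} By \eqref{8A}, $\sum_{k=0}^m s_k(\alpha)t^k=\prod_{i=1}^m(t+\alpha_i)$, so the operator in \eqref{PDE444} equals $\prod_{i=1}^m\bigl((-\Delta)+\alpha_i\bigr)$. The standing assumption $s_0(\alpha),\dots,s_{m-1}(\alpha)\in[0,+\infty)$, combined with Lemma~\ref{8}, forces $\alpha_i\geq 0$ for each $i$. Set $u_1:=u$ and, inductively, $u_{i+1}:=\bigl((-\Delta)+\alpha_i\bigr)u_i$ for $i=1,\dots,m-1$. Then $U=(u_1,\ldots,u_m)$ solves
\begin{equation*}
-\Delta u_i+\alpha_i u_i=u_{i+1}\ \ (1\leq i<m),\qquad -\Delta u_m+\alpha_m u_m=f(u_1)\quad\text{in }\OmPu.
\end{equation*}
An induction on $i$ shows each $u_i$ is a non-negative linear combination of $u,(-\Delta)u,\ldots,(-\Delta)^{i-1}u$ (coefficients being elementary symmetric polynomials in $\alpha_1,\ldots,\alpha_{i-1}$); hence, by the Navier conditions, $u_i=0$ on $\partial\Omega$, and by the lower-boundedness of the iterated Laplacians together with $u>0$, $\inf_{\OmPu}u_i>-\infty$. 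A backward induction from $i=m$ using \cite[Lemma 2.1]{CLN2} and the strong maximum principle gives $u_i>0$ in $\OmPu$ for every $i$.

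\emph{Step 2: Moving planes on the system.} For $\lambda\in(0,\sup_{\Omega}x_1)$ set $\Sl:=\Omega\cap\{x_1>\lambda\}$, $\xl:=(2\lambda-x_1,x_2,\ldots,x_n)$ and $V_i^\lambda(x):=u_i(\xl)-u_i(x)$. Since $f$ is Lipschitz and non-decreasing and the coupling terms $\alpha_i u_i, u_{i+1}$ produce a cooperative structure, each $V_i^\lambda$ satisfies a linear cooperative system on $\Sl$ with non-positive data on $\partial\Sl\setminus\{x_1=\lambda\}$ and $V_i^\lambda=0$ on $\{x_1=\lambda\}$. For $\lambda$ close to $\sup_\Omega x_1$ the cap $\Sl$ is narrow, and the weak maximum principle in narrow domains for cooperative systems (in the vein of \cite{Troy81,deFigueiredo,CoVe}) yields $V_i^\lambda\geq 0$ in $\SlPu$ for every $i$. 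Setting
\begin{equation*}
\lambda_0:=\inf\{\lambda\geq 0:\ V_i^\mu\geq 0\ \text{in }\Sigma_\mu\setminus\{0_\mu\}\ \forall\,\mu\geq\lambda,\,\forall\,i\},
\end{equation*}
a standard contradiction argument (sliding the hyperplane slightly further left, controlling the newly formed narrow strip, and appealing to the strong maximum principle component-wise) shows $\lambda_0=0$. Combined with the symmetry of $\Omega$, this gives (1); the strict inequalities $V_i^\lambda>0$ in $\SlPu$ for every $\lambda\in(0,\sup_\Omega x_1)$ yield (2) by the strong maximum principle.

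\emph{Main obstacle.} The delicate ingredient is the singularity at the origin: for $\lambda>0$ the reflected origin $0_\lambda:=(2\lambda,0,\ldots,0)$ is an interior point of $\Sl$ at which $u_i(\xl)$ may blow up, so $V_i^\lambda$ is only defined on $\SlPu$. Positivity of each $u_i$ keeps $V_i^\lambda$ bounded below near $0_\lambda$, and the punctured-domain maximum principle \cite[Lemma 2.1]{CLN2}, suitably adapted to the cooperative system built in Step~1, shows that this isolated singularity is harmless for comparison. Carefully formalising this extension and verifying that all the standard moving-plane manoeuvres (initialisation, continuation, and the analysis at the critical level $\lambda_0$) survive in the punctured geometry constitutes the principal technical effort of the proof.
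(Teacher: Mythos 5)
Your proposal follows essentially the same route as the paper: factor the operator via \eqref{8A} and Lemma~\ref{8} into $\prod_i(-\Delta+\alpha_i)$, pass to the weakly coupled cooperative system for $U=(u_1,\ldots,u_m)$, establish positivity of the components via the punctured-domain maximum principle and Navier conditions, and run the moving planes argument using de Figueiredo's small-domain maximum principle for cooperative systems together with a punctured-domain variant to handle the reflected singularity $0_\lambda$. The paper carries out the ``principal technical effort'' you flag by splitting the critical level $\mu$ into the three cases $\mu>\tfrac12$, $\mu=\tfrac12$, $\mu\in(0,\tfrac12)$ (depending on whether $0_\mu$ lies outside $\overline\Omega$, on $\partial\Omega$, or inside $\Omega$) and invoking Lemma~\ref{PerG}; for part~(2) it applies Hopf's lemma to $v_1^{(\lambda)}$ on $T_\lambda$ rather than the strong maximum principle alone.
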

\noindent Notice that Theorem~\ref{thm.main.pre} is a straightforward consequence of
the above Theorem~\ref{thm.main}, by choosing~$\alpha_1=\dots=\alpha_m=0$
(which gives that~$s_0(\alpha)=\dots=s_{k-1}(\alpha)=0$). \medskip

We now spend few words on the regularity assumption of $
f$ in \eqref{f1}. When $m = 1$, in \cite{CLN2} 
the analogue of Theorem \ref{thm.main.pre} 
is proved
under the weaker assumption that
$f$
is only \emph{locally} Lipschitz-continuous
(and possibly depending on the spatial variable $x$).
In our case, we instead
assume a \emph{global} Lipschitz assumption in~\eqref{f1},
since boundedness issues become more involved
when $m\geq 2$ (roughly speaking, for the case of systems, the positivity
of one component in a subdomain does not imply the positivity
of the other components). For a similar reason, we also
assume the bound
\begin{equation}\label{TR}
\inf_{\OmPu} u_i >-\infty.\end{equation} Indeed, when $m=1$
dealing with positive solutions implies immediately the former bound.
On the other hand, for $m \geq 2$, while this is still true for $u_1$,
this is not automatically inherited by the other components of the system.
We stress that an assumption similar to~\eqref{TR}
has been made also by Troy in \cite{Troy81},
who asked for the positivity of all the components of the solution of a semilinear elliptic system. 
We think that it is an interesting {\em open problem}
to further investigate
whether either the global Lipschitz
regularity assumption or the bound from
below of the $(-\Delta)^j u$ 
can be relaxed as in \cite{CLN2}.
\medskip

The rest of the paper is organized as follows. After stating some
notation, in Section~\ref{Sec2} we present the main technical results,
related to suitable versions of the maximum principle for cooperative
systems,
to which we can in turn reduce the setting of problem~\eqref{PDE4}.
Then, Theorem~\ref{thm.main}
will be proved in Section~\ref{Sec3} obtaining the symmetry result
by the moving plane and reflection methods (which need to be
suitably
adapted to take into account the cases of higher order operators
and not fully coupled cooperative systems),
and the monotonicity result by the Hopf's Lemma.



\section{Notation, assumptions and preliminary results}\label{Sec2}
We introduce some notation and the standing assumptions used along the paper.
For a function $U:\Omega\to \R^{m}$, $U=(u_1,\ldots,u_m)$, we say that $U \geq 0$
if $u_i \geq 0$ for every $i=1,\ldots,m$.
The notation for the moving plane technique is
as in the paper of Serrin \cite{Serrin}, and it goes as follows. 
Given a point $x\in \R^n$, we denote by $(x_1, \dots,x_n)$ its components,
and, when more practical, we equivalently write $x = (x_1,x')\in\R\times\R^{n-1}$.
For a given unit vector $e \in \R^n$ and for $\lambda\in\R$, we define the hyperplane
$$
T_{\lambda}:= \{ x \in \R^n: \scal{e}{x}=\lambda \}.
$$
From now on, unless otherwise stated, without loss of generality, we assume 
that $e=e_1$, i.e. the normal to $T_{\lambda}$ is parallel
to the $x_1$-direction. \\
To simplify the readability, we further assume that
\begin{equation}\label{A1}
\tag{A1}
\sup_{\Omega} x_1 = 1. 
\end{equation}
Now, for $\lambda \in (0,1)$, we define
\begin{equation}\label{x_lambda}
x_{\lambda}:= (2\lambda - x_1, x').
\end{equation}
We stress that~\eqref{x_lambda} may lead to points that
{\it do not belong} to $\OmCh$, e.g. $0_{\lambda}= (2\lambda, 0, \ldots,0)\not\in\OmCh$
for $\lambda>\tfrac{1}{2}$, in view of~\eqref{A1}.
 Proceeding further with the notation,
given any $\lambda\in\mathbb R$, we introduce the possibly empty set 
\begin{equation*}
\Sl:=\{x\in\Omega\, : \,x_1>\lambda\}
\end{equation*}
and its reflection with respect to
$T_{\lambda}$, 
$$
\Spl:=\{x_{\lambda} \in \mathbb R^n \,:\,x\in\Sl\}.
$$

\noindent Since $\Omega \subset \R^n$ is bounded, by 
\eqref{A1} we have that
$T_{\lambda}$ does not touch $\overline{\Omega}$ for $\lambda > 1$;
moreover, 
$T_{1}$ touches $\overline{\Omega}$
and, for every $\lambda \in (0,1)$, the hyperplane $T_{\lambda}$
cuts off from $\Omega$ the portion $\Sl$. 
At the beginning of this process, the reflection $\Spl$ of $\Sl$ will be contained in $\Omega$.

Let now $u\in C^{2m}(\Omega\setminus\{0\})\cap C^{2m-2}(\overline{\Omega}\setminus\{0\})$
be a classical solution of the $2m$-th order boundary value problem
\eqref{PDE444}. If $\alpha = (\alpha_1,\ldots,\alpha_m)\in \R^m$, we set
\begin{equation} \label{eq.defuivere}
 u_1 := u, \qquad u_{i+1} = (-\Delta+\alpha_i)u_i \quad\big(\text{for $i = 1,\ldots,m-1$}\big).
\end{equation}
Moreover, for every 
$i=1,\ldots,m$ and for every fixed $\lambda\in (0,1)$, we introduce the functions
$u_{i}^{(\lambda)}, v_{i}^{(\lambda)}: \SlPu \to \R$ defined by
$$u_{i}^{(\lambda)}(x):= u_{i}(x_{\lambda}) \qquad \text{and}
\qquad v_{i}^{(\lambda)}(x):= u_{i}^{(\lambda)}(x)-u_{i}(x).$$
Finally, to simplify the notation, we also define
\begin{equation}\label{def:U}
U_{\lambda}(x) := \left( u_{1}^{(\lambda)}, u_{2}^{(\lambda)}, \ldots, u_{m}^{(\lambda)} \right) \quad x \textrm{ in } \SlPu,
\end{equation}
\noindent and
\begin{equation}\label{def:V}
V_{\lambda}(x) := \left( v_{1}^{(\lambda)}, v_{2}^{(\lambda)}, \ldots, v_{m}^{(\lambda)} \right) \quad x \textrm{ in } \SlPu,
\end{equation}
We observe that, since $\Omega$ is symmetric and convex with respect to $T_0=\{x_1=0\}$,
the above definitions~\eqref{def:U} and~\eqref{def:V} 
are well-posed for every $\lambda\in[0,1)$.

\medskip
We now recall the maximum principle in small domains for cooperative systems proved
by de Figueiredo in \cite{deFigueiredo}.

\begin{lemma}[Proposition 1.1 of \cite{deFigueiredo}]\label{MaxSmall}
Let $\Omega\subset\mathbb R^n$ be a bounded domain,
$$
\mathcal L_m:=\left(
\begin{array}{ccccc}
\Delta & 0 & 0 & \ldots & 0\\
0 & \Delta & 0 & \ldots & 0\\
0 & 0 & \ddots & 0 & 0\\
0 & 0 & \ldots & \Delta & 0\\
0 & 0 & \ldots & 0 & \Delta
\end{array}
\right)\quad \mbox{and}\quad
A(x):= (a_{ij}(x))_{i,j=1,\ldots,m},
$$
with $a_{ij}\in L^\infty(\Omega)$ for $i,j=1,\ldots,m$ and $a_{ij}\ge 0 $ for $i\neq j$. 
Suppose that $U:=(u_1,\ldots,u_m)$ is a vector-valued function in $C^2(\Omega)$ such that
$$
\begin{cases}
-(\mathcal L_m+A {(x)})\,U\ge0 & \mathrm{in\,\Omega},\\
\displaystyle{\liminf_{x\to y}}\,U(x)\ge0 & \mathrm{for\,every}\,y\in\partial\Omega.
\end{cases}
$$
Then, there exists $\delta=\delta(n,\mathrm{diam}(\Omega))>0$ such that if $|\Omega|<\delta$, $U\ge 0$ in $\Omega$ 
\end{lemma}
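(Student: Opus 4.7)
The plan is to reduce the vector maximum principle to a contractive estimate on the suprema of the negative parts $M_i := \sup_\Omega u_i^-$, exploiting the cooperativity $a_{ij}\geq 0$ for $i\neq j$ to absorb the coupling into the $u_j^-$ alone. Writing $-(\mathcal{L}_m+A(x))U\geq 0$ componentwise, each $u_i\in C^2(\Omega)$ is a classical supersolution of
$$-\Delta u_i \;\geq\; \sum_{j=1}^m a_{ij}(x)\,u_j\quad\text{in }\Omega,\qquad \liminf_{x\to\partial\Omega}u_i(x)\geq 0,$$
so the task becomes showing $M_i=0$ for every $i$.

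First I would apply the Alexandrov--Bakelman--Pucci (ABP) estimate to the $i$-th inequality. Since the boundary condition is given only in the $\liminf$ sense, I would invoke ABP on the approximating open sets $\Omega_\varepsilon^{(i)}:=\{x\in\Omega:u_i(x)<-\varepsilon\}\Subset\Omega$ and let $\varepsilon\downarrow 0$, obtaining
$$M_i \;\leq\; c(n)\,\mathrm{diam}(\Omega)\,\Bigl\|\Bigl(\textstyle\sum_j a_{ij}u_j\Bigr)^-\Bigr\|_{L^n(\Omega\cap\{u_i<0\})}.$$
On the contact set $\{u_i<0\}$ one has $u_i^+=0$ and, for $j\neq i$, the sign of $a_{ij}$ gives the pointwise bound $-a_{ij}(u_j^+-u_j^-)\leq a_{ij}u_j^-$; combined with $-a_{ii}u_i=a_{ii}u_i^-\leq a_{ii}^+u_i^-$, this yields
$$\Bigl(\sum_{j} a_{ij}u_j\Bigr)^- \;\leq\; K\sum_{j=1}^m u_j^-\qquad\text{on }\{u_i<0\},$$
with $K:=\max_{i,j}\|a_{ij}\|_{L^\infty(\Omega)}$. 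Using the trivial bound $\|u_j^-\|_{L^n(\Omega)}\leq |\Omega|^{1/n}M_j$, the ABP estimate becomes
$$M_i \;\leq\; c(n)\,K\,\mathrm{diam}(\Omega)\,|\Omega|^{1/n}\sum_{j=1}^m M_j.$$

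Finally, summing over $i=1,\ldots,m$ yields
$$\sum_{i=1}^m M_i \;\leq\; m\,c(n)\,K\,\mathrm{diam}(\Omega)\,|\Omega|^{1/n}\sum_{i=1}^m M_i,$$
so choosing $\delta=\delta(n,\mathrm{diam}(\Omega))$ small enough that $m\,c(n)\,K\,\mathrm{diam}(\Omega)\,\delta^{1/n}<1$ forces $\sum_i M_i=0$ whenever $|\Omega|<\delta$, which gives $U\geq 0$ in $\Omega$.

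The key step, and the one that must be handled carefully, is the pointwise inequality on $\{u_i<0\}$: this is precisely where cooperativity is used, because without $a_{ij}\geq 0$ for $i\neq j$ the quantity $(\sum_j a_{ij}u_j)^-$ would pick up contributions from the $u_j^+$ and the fixed-point inequality on $\sum_i M_i$ could not be closed. The boundary-regularity issue caused by the $\liminf$ condition is routine and handled by the $\varepsilon$-truncation mentioned above; alternatively, the same conclusion can be obtained by testing the $i$-th inequality with $u_i^-\in H_0^1(\Omega)$ and combining with the Poincar\'e inequality, whose constant scales as $|\Omega|^{1/n}$.
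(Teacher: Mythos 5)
The paper does not actually prove this lemma --- it is quoted, with a citation, as Proposition~1.1 of de Figueiredo's paper --- so there is no internal proof to compare against; your ABP-plus-contraction argument on the quantities $M_i=\sup_\Omega u_i^-$ is correct, and it is essentially the standard proof of the maximum principle in domains of small measure for cooperative systems, i.e.\ the same route as the cited source. Two minor points worth making explicit: the final contraction step needs $\sum_i M_i<\infty$, which does follow from the $\liminf$ boundary condition together with the continuity of the $u_i$ and the boundedness of $\Omega$ (otherwise the inequality $\sum_i M_i\le C\sum_i M_i$ is vacuous); and the threshold $\delta$ you obtain depends on $m$ and $K=\max_{i,j}\|a_{ij}\|_{L^\infty(\Omega)}$ in addition to $n$ and $\mathrm{diam}(\Omega)$, which is the correct dependence --- the lemma's notation $\delta=\delta(n,\mathrm{diam}(\Omega))$ understates it, though this is harmless in the paper's applications where the coefficients are uniformly bounded.
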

The following technical result can be seen as a slight
variation of Lemma~\ref{MaxSmall}
and as an extension of \cite[Proposition 2.1]{CLN2} to the case of (special) cooperative systems.
\begin{lemma}\label{PerG}
Let $\xi\in\R^n, r>0$ be fixed and let
\begin{equation}\label{TSc1}
\Omega \subset B_{r}(\xi)\end{equation} be an open set. Moreover, let $x_0\in\Omega$
be arbitrarily chosen,
let $\mathcal{L}_m,\,A(x)$ be as in Lemma~\ref{MaxSmall} and let
$U = (u_1,\ldots,u_m)\in C^{2}(\Omega \setminus \{x_0\})$ be such that
\begin{equation}\label{TSc2}\begin{cases}
 -(\mathcal{L}_m+A(x))\,U \geq 0 & \mathrm{in\,}\Omega \setminus \{x_0\}, \\[0.1cm]
  \liminf\limits_{x\to y}U(x) \geq 0 & \mathrm{for\,every}\,y\in\partial \Omega, \\
 \inf_{\Omega\setminus\{x_0\}} u_i > -\infty & i = 1,\ldots, m.
 \end{cases}\end{equation}
Then, for sufficiently small~$r >0$, we have
$$U \geq 0, \quad \mathrm{in}\,\Omega \setminus \{x_0\}.$$
\end{lemma}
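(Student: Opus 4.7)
My idea is to extend the scalar argument of \cite[Proposition 2.1]{CLN2} to the cooperative system of Lemma \ref{MaxSmall}, by adding a positive singular barrier $\epsilon\Phi$ to $U$ which blows up at $x_0$, applying Lemma \ref{MaxSmall} to $\tilde U := U+\epsilon\Phi$ on the (non-punctured) domain $\Omega\setminus\overline{B_\rho(x_0)}$ for small $\rho>0$, and then passing to the limits $\rho\to 0^+$ and $\epsilon\to 0^+$.

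The key building block will be a scalar function $\phi\in C^2(\Omega\setminus\{x_0\})$ with $\phi>0$, $\phi(x)\to+\infty$ as $x\to x_0$, and $-\Delta\phi\geq mM\phi$ in $\Omega\setminus\{x_0\}$, where $M:=\max_{i,j}\|a_{ij}\|_{L^\infty(\Omega)}$. Setting $\Phi:=(\phi,\dots,\phi)$, the componentwise computation
\[
\bigl(-(\mathcal L_m+A(x))\Phi\bigr)_i = -\Delta\phi - \Bigl(\sum_{j=1}^m a_{ij}(x)\Bigr)\phi \;\geq\; -\Delta\phi - mM\phi \;\geq\; 0
\]
will then automatically show that $\Phi$ is a supersolution of the cooperative system on $\Omega\setminus\{x_0\}$. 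For $n\geq 3$ the natural choice $\phi(x):=|x-x_0|^{-s}$ with $s\in(0,n-2)$ does the job, since $-\Delta\phi/\phi = s(n-2-s)|x-x_0|^{-2}$, which on $\Omega\subset B_r(\xi)$ is bounded below by $s(n-2-s)(2r)^{-2}$ and therefore exceeds $mM$ as soon as $r$ is small enough. For $n=2$, where pure negative powers are no longer superharmonic, a more delicate singular barrier must be used, e.g.\ a suitably rescaled and truncated radial singular solution of the Helmholtz-type operator $-\Delta-\mu$ with $\mu>mM$, arranged to remain strictly positive on all of $\Omega$.

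Once $\Phi$ is in hand, I fix $\epsilon>0$ and, exploiting $\phi(x)\to+\infty$ at $x_0$ together with the uniform lower bounds $\inf_{\Omega\setminus\{x_0\}}u_i>-\infty$, I choose $\rho>0$ so small that $\epsilon\Phi\geq -U$ on $\partial B_\rho(x_0)$ componentwise. Then $\tilde U := U+\epsilon\Phi$ satisfies $-(\mathcal L_m+A)\tilde U\geq 0$ in $\Omega\setminus\overline{B_\rho(x_0)}$ and $\liminf_{x\to y}\tilde U(x)\geq 0$ on the whole boundary of this domain (on $\partial\Omega$ via the hypothesis on $U$ and the positivity of $\Phi$; on $\partial B_\rho(x_0)$ by the choice of $\rho$). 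Since $|\Omega\setminus\overline{B_\rho(x_0)}|\leq|B_r(\xi)|\to 0$ as $r\to 0^+$, shrinking $r$ once more if necessary brings $\Omega\setminus\overline{B_\rho(x_0)}$ below the smallness threshold of Lemma \ref{MaxSmall}. That lemma then yields $\tilde U\geq 0$ in $\Omega\setminus\overline{B_\rho(x_0)}$; sending $\rho\to 0^+$ gives $U+\epsilon\Phi\geq 0$ in $\Omega\setminus\{x_0\}$, and sending $\epsilon\to 0^+$ (with $\Phi$ finite away from $x_0$) concludes.

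The main obstacle I expect is the construction of the scalar barrier $\phi$, in particular for $n=2$: there, the natural power ansatz $|x-x_0|^{-s}$ is subharmonic rather than superharmonic for every $s>0$, so one has to work a bit harder to produce a singular supersolution of a suitably shifted Laplacian which remains strictly positive on all of $\Omega$ for sufficiently small $r$. Once $\phi$ is at hand, the rest of the proof is a routine adaptation of \cite[Proposition 2.1]{CLN2} combined with Lemma \ref{MaxSmall}.
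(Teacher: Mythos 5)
Your overall strategy is exactly the one the paper uses: add a componentwise singular barrier $\epsilon\Phi$ that blows up at $x_0$, check it is a strict supersolution of the cooperative system, invoke Lemma~\ref{MaxSmall} on a domain of small measure, and then let $\epsilon\to 0^+$. The one genuine weakness is your choice of barrier. The power ansatz $\phi(x)=|x-x_0|^{-s}$ only works for $n\geq 3$: in $n=2$ every such power is \emph{subharmonic}, as you observe, and you leave the $n=2$ case as an acknowledged ``obstacle,'' sketching a Bessel/Helmholtz workaround without carrying it out. Since the paper's standing assumption is $n\geq 2$, this is a real gap, not a cosmetic omission: the proof as written is incomplete precisely in the borderline dimension. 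The paper avoids the dichotomy altogether by taking
\[
h(x)=\bigl(-\ln|x-x_0|\bigr)^a,\qquad a\in(0,1),
\]
which is superharmonic near $x_0$ in \emph{every} dimension $n\geq 2$ (the sign of $\Delta h$ near $0$ is controlled by the leading term $\frac{a(a-1)}{|x|^2}(-\ln|x|)^{a-2}<0$, with the $(n-2)$ term only helping when $n\geq 3$), blows up at $x_0$, is nonnegative for $r<1$, and satisfies $\Delta h+Kh\leq 0$ on $B_r\setminus\{x_0\}$ once $r$ is small. That single barrier closes the case you flagged. Your Bessel idea (using $-Y_0(\sqrt{\mu}|x-x_0|)$ with $\mu>mM$) could in principle be made to work on a sufficiently small disc, but it is considerably more delicate to justify positivity on all of $\Omega$, and it is unnecessary.

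A minor procedural difference, not a flaw: you apply Lemma~\ref{MaxSmall} on the annulus $\Omega\setminus\overline{B_\rho(x_0)}$ and then let $\rho\to 0^+$, whereas the paper applies it directly on the punctured set $\Omega\setminus\{x_0\}$, using the facts that $\inf u_i>-\infty$ and $h\to+\infty$ at $x_0$ to verify $\liminf_{x\to x_0}(U+\epsilon H)=+\infty\geq 0$ at that boundary point of the punctured domain. Both are valid once the barrier is in place. To repair your proof it suffices to replace $|x-x_0|^{-s}$ by $(-\ln|x-x_0|)^a$ throughout; the rest of your argument then goes through for all $n\geq 2$ without splitting cases.
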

\begin{proof}
Without loss of generality, we can assume that $\xi = x_0 = 0$.
We then consider the vector-valued function $W := U + \epsilon H$ given by
\begin{equation*}
W:= \left( \begin{array}{c}
               u_1 + \epsilon h\\
							 u_2 + \epsilon h\\
							 \vdots \\
							 u_m + \epsilon h \\
					\end{array}\right),
\end{equation*}
\noindent where $h(x):= (- \ln(|x|))^{a}$, with $a \in (0,1)$ to be chosen,
as in \cite{CLN2}.
Obviously, we have that
\begin{equation}\label{TSc3}
{\mbox{$h \geq 0$ on $B_r(0)$, and $h \to +\infty$ as $|x|\to 0$.}}\end{equation}
Moreover, we claim that
$$\mathcal{L}_m h \leq 0, \quad \textrm{in } B_{r}(0)\setminus \{0\},$$
if $r>0$ is small enough. Indeed, since $a_{ij}\in L^\infty(\Omega)$ for any $i,j$,
we can define
$$C_{ij}:= \sup\{a_{ij}(x): x \in \Omega)\}\qquad{\mbox{and}}\qquad
K:= \max_{i = 1,\ldots,m}\sum_{j = 1}^m C_{ij}.$$ 
Then, a direct computation shows that
\begin{equation*}
 \begin{split}
\Delta h + K h & = - \frac{a(-\ln(|x|))^{a-1}}{|x|^2} (n-2) + \frac{a(a-1)}{|x|^2} (\ln(|x|))^{a-2} + 
K (-\ln(|x|))^a \\[0.35cm]
& = \begin{cases}  
 \displaystyle - \frac{a(-\ln(|x|))^{a-1}}{|x|^2}\cdot\Big(n - 2 + o(|x|)\Big) \\
 \\
 \displaystyle \frac{(\ln(|x|))^{a-2}}{|x|^2} \cdot\Big(
a(a-1) + o(|x|)\Big)
\end{cases} \qquad \big(\text{as $|x|\to 0$}\big).
\end{split}
\end{equation*}
Since $n\geq 2$ and $a\in (0,1)$, if $r \ll 1$ is sufficiently small we obtain
\begin{equation} \label{eq.Lmhleqzero}
 \Delta h + K h \leq 0 \quad \text{in $B_r(0)\setminus\{0\}$}.
\end{equation}
Now, by definition of $K$ and the fact that
$h\geq 0$ on $\R^n\setminus\{0\}$, from~\eqref{TSc1}, \eqref{TSc2}
and~\eqref{eq.Lmhleqzero} we get 
that
\begin{equation} \label{eq.tousemin1}
 \begin{split}
\Big[(\mathcal{L}_m + A(x))\,W\Big]_i & = \Big[(\mathcal{L}_m + A(x))\,U\Big]_i
+ \epsilon\,\Big(\Delta h + \big(\textstyle\sum_{j = 1}^ma_{ij}(x)\big)\cdot h\Big) \\[0.2cm]
& \leq \epsilon \Big(\Delta h + K h\Big) \leq 0 \qquad 
\text{for $i = 1,\ldots,m$}.
\end{split}
\end{equation}
Furthermore, recalling~\eqref{TSc3},
we have 
that
\begin{flalign}
& \bullet\,\,
\text{$\liminf\limits_{x\to y}W(x) \geq \liminf\limits_{x\to y}U(x) \geq 0$ for every
$y\in\partial\Omega$};
\label{eq.tousemin2} \\
& \bullet\,\,
\text{$W_i(x) = u_i(x)+\epsilon\,h(x)\to+\infty$ as $|x|\to 0$ 
(since $\inf_{\Omega \setminus\{x_0\}} u_i > -\infty$)}. \label{eq.tousemin3} &&  
\end{flalign}
 Gathering together~\eqref{eq.tousemin1}, \eqref{eq.tousemin2} and~\eqref{eq.tousemin3}
(and by possibly shrinking $r$), we are entitled to apply the weak minimum principle
in Lemma~\ref{MaxSmall}, obtaining $W = U+\epsilon H\geq 0$ in $\Omega\setminus\{0\}$.
From this, taking the limit as $\epsilon \to 0^{+}$,
we conclude that~$
U \geq 0$ in~$\Omega \setminus \{0\}$.
\end{proof}
For the sake of completeness, and due to its relevance in the sequel,
we close this section with the following
 more general version of 
\cite[Lemma 2.1]{CLN2}.
\begin{lemma} \label{lem.WMPbuco}
 Let $\Omega\subseteq\R^n$ be an open and bounded set and let $x_0\in\Omega$ be fixed.
 Moreover, let $a(x)$ be a non-negative function on $\Omega\setminus\{x_0\}$.
 Finally, let $w\in C^2(\Omega\setminus\{x_0\})$ be s.t.
 \begin{equation} \label{eq.systemw}
  \begin{cases}
  (-\Delta + a(x))\,w(x) \geq 0 & \mathrm{in}\,\Omega\setminus\{x_0\}, \\[0.15cm]
  \displaystyle \liminf_{x\to y}w(x) \geq 0 &  
  \mathrm{for\,every}\,y\in\partial\Omega,\\[0.15cm]
	\inf w > -\infty & \mathrm{in }\, \Omega \setminus \{x_0\}.
  \end{cases}
 \end{equation}
 Then, $w\geq 0$ in $\Omega\setminus\{x_0\}$.
\end{lemma}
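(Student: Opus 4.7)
The plan is to extend the scalar weak maximum principle of \cite[Lemma 2.1]{CLN2} (which treats the pure Laplacian) to the operator $-\Delta + a(x)$ with $a \geq 0$, following the barrier strategy already used in the proof of Lemma \ref{PerG}. Translating so that $x_0 = 0$, I would introduce an auxiliary function $h \in C^2(\Omega \setminus \{0\})$ which is positive on $\Omega$, blows up at the origin, and satisfies $-\Delta h \geq 0$ on $\Omega \setminus \{0\}$. A convenient choice is the (shifted) fundamental solution of the Laplacian: namely, $h(x) = |x|^{2-n}$ when $n \geq 3$, and $h(x) = \ln(R/|x|)$ when $n = 2$, with $R$ large enough that $\Omega \subset B_R(0)$. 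In either case $-\Delta h = 0$ on $\R^n \setminus \{0\}$ and $h(x) \to +\infty$ as $x \to 0$ (a logarithmic barrier of the type $(-\ln|x|)^\alpha$ with $\alpha \in (0,1)$, as in Lemma \ref{PerG}, would work equally well).

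For any $\epsilon > 0$ I would then set $w_\epsilon := w + \epsilon h$ and observe that, thanks to $a \geq 0$ and $h \geq 0$,
\[
\bigl(-\Delta + a(x)\bigr) w_\epsilon \;=\; \bigl(-\Delta + a(x)\bigr) w \,+\, \epsilon \bigl(-\Delta h + a(x)\, h\bigr) \;\geq\; 0 \quad \text{in } \Omega \setminus \{0\}.
\]
The boundary condition $\liminf_{x \to y} w(x) \geq 0$ on $\partial \Omega$, combined with $h \geq 0$, gives $\liminf_{x \to y} w_\epsilon(x) \geq 0$ for every $y \in \partial \Omega$; meanwhile the lower bound $\inf w > -\infty$ and the blow-up of $h$ at the origin yield $w_\epsilon(x) \to +\infty$ as $x \to 0$.

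The concluding step is to apply, for each sufficiently small $\rho > 0$, the classical weak maximum principle for $-\Delta + a(x)$ on the regular domain $\Omega_\rho := \Omega \setminus \overline{B_\rho(0)}$. On $\partial \Omega_\rho = \partial \Omega \cup \partial B_\rho(0)$ the function $w_\epsilon$ has non-negative boundary values: the condition on $\partial \Omega$ is already in place, while on $\partial B_\rho(0)$ one has $w_\epsilon \geq \inf w + \epsilon h(\rho)$, which is strictly positive once $\rho$ is chosen small enough in terms of $\epsilon$ and $\inf w$. This gives $w_\epsilon \geq 0$ in $\Omega_\rho$; letting $\rho \to 0^+$ yields $w_\epsilon \geq 0$ in $\Omega \setminus \{0\}$, and a final passage $\epsilon \to 0^+$ produces the desired $w \geq 0$. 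The only delicate point I foresee is justifying the weak maximum principle for $-\Delta + a(x)$ when $a$ is merely non-negative with no upper bound in sight; this however reduces to the strong maximum principle for $-\Delta$ on the open set $\{w_\epsilon < 0\}$, where the zero-order term works in the favorable direction and therefore can be discarded.
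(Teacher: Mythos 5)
Your proof is correct and follows essentially the same strategy as the paper's: perturb $w$ by $\epsilon$ times a non-negative function, harmonic away from $x_0$, which blows up at $x_0$, then apply the weak maximum principle for $-\Delta+a(x)$ and let $\epsilon\to 0^+$. The paper uses the Green function $G_\Omega(x_0,\cdot)$ where you use the (shifted) fundamental solution together with the exhaustion $\Omega\setminus\overline{B_\rho(x_0)}$, but this is an inessential variation of the same barrier argument.
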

 \begin{proof}
  For every $\epsilon > 0$, we consider the function
  $$w_\epsilon(x) := w(x) + \epsilon\,G_\Omega(x_0,x) 
  \qquad \text{$x\in\Omega\setminus\{x_0\}$},$$
  where $G_\Omega(x_0,\cdot)$ is the Green function of $\Delta$
  related to the open set
  $\Omega$ and with pole at $x_0$ (note that, 
  as $\Omega$ is bounded, such a function always
  exists, even if $n = 2$). More precisely,
  $$G_\Omega(x_0,x) = \Gamma(x_0,x) - h_{x_0}(x) \qquad
  \text{$x_0\in\Omega$ and $x\in\Omega\setminus\{x_0\}$},$$
  where $\Gamma(x_0,\cdot)$ is the global fundamental solution
  of $\Delta$ (with pole at $x_0$) and $h_{x_0}$
  is the greatest harmonic minorant of $\Gamma(x_0,\cdot)$
  on $\Omega$, i.e. (see \cite{AG}),
  $$
   h_{x_0} = \sup\big\{\text{$h$\,:\,$h$ is harmonic on $\Omega$ and
   $h\leq \Gamma(x_0,\cdot)$ on $\Omega$}\big\}.
  $$
  
  Since $G_\Omega(x_0,\cdot)$ is a non-negative harmonic function out of $\{x_0\}$
  and, by assumption, $a\geq 0$ on $\Omega\setminus\{x_0\}$, it readily follows
  from~\eqref{eq.systemw} that
  $$(-\Delta + a(x))\,w_\epsilon(x) = (-\Delta+a(x))\,w(x) + \epsilon\,a(x)\,G_\Omega(x_0,x)
  \geq 0, \quad \text{for $x\in\Omega\setminus\{x_0\}$}.$$
  Moreover, again by~\eqref{eq.systemw},
  for every $y\in\partial\Omega$ we have that
  $$\liminf_{x\to y}w_\epsilon(x) \geq \liminf_{x\to y}w(x) \geq 0.$$
  Finally, since $G_\Omega(x_0,x)\to +\infty$ as $x\to x_0$ and
  $\inf_{\OmPu}w >-\infty$, 
	we also infer that
  $$\liminf_{x\to x_0}w_\epsilon(x) = +\infty.$$
  By combining all these facts, we are entitled to apply the classical weak maximum principle
  for $-\Delta+a(x)$ on $\Omega\setminus\{x_0\}$, which gives
  $w_\epsilon\geq 0$ on $\Omega\setminus\{x_0\}$.   
  {F}rom this, by sending
  $\epsilon\to 0^+$, we conclude that $w\geq 0$
  throughout $\Omega\setminus\{x_0\}$, as desired.
 \end{proof}
\medskip



\section{Proof of Theorem~\ref{thm.main}}\label{Sec3}
In the present setting, we can now perform the proof
of Theorem~\ref{thm.main}. For simplicity we separate the
proof of the first claim in Theorem~\ref{thm.main},
which is the core of the argument, from the proof
of the second claim, which is mostly straightforward.

\begin{proof}[Proof of Theorem~\ref{thm.main} - (1)]
First of all we observe that, if
$u_1,\ldots,u_m$ are as in \eqref{eq.defuivere},
they satisfy the following system
\begin{equation} \label{eq.systemperu}
\left\{ \begin{array}{rll}
     (-\Delta +\alpha_i) u_i = u_{i+1} & \textrm{in } \OmPu, & i=1,\ldots,m-1,\\[0.12cm]
		 (-\Delta + \alpha_m) u_m = f(u_1) & \textrm{in } \OmPu,\\[0.12cm]
		 u_1>0 & \textrm{in } \OmPu.
		  \end{array}\right.
\end{equation}

\noindent Analogously, for every fixed $\lambda\in (0,1)$,
the functions $v_{i}^{(\lambda)}$ satisfy
in the open set $\SlPu$ the following (system of) PDEs
\begin{equation} \label{eq.systempervlambda}
 \begin{split}
(-\Delta +\alpha_i) v_{i}^{(\lambda)} & = (-\Delta +\alpha_i) u_i^{(\lambda)} + 
(-\Delta +\alpha_i) u_i \\[0.2cm]
& = \left\{ \begin{array}{rll}
          u_{i+1}^{(\lambda)} - u_{i+1} = v_{i+1}^{(\lambda)}, & i=1,\ldots,m-1,\\[0.15cm]
               f(u_1^{(\lambda)}) - f(u_1) = c(x,\lambda) v_{1}^{(\lambda)}, & i=m,
           \end{array}\right.
 \end{split}
\end{equation}
\noindent where $c(x,\lambda) \in [0,M]$ for a certain positive constant $M>0$, due to~\eqref{f1}.
Now, by 
the very definition of $u_1,\ldots,u_m$, it is not difficult to see that, 
for $j = 1,\ldots, m$, the function
$u_j$ is a linear combination 
with non-negative coefficients
of $(-\Delta)^i u$ for $0 \leq i \leq j$.
As a consequence,
due to the last two conditions in~\eqref{PDE444}, we have that
\begin{equation} \label{eq.UvanishesdeOmega}
 \text{$U = (u_1,\ldots,u_m) \equiv 0$ on $\partial \Omega$}
 \quad \text{and} \quad
 \text{$\inf_{\OmPu} u_i >-\infty$ for every $i = 2,\ldots,m$}.
\end{equation} 
{F}rom this, by Lemma
\ref{lem.WMPbuco} and the classical strong maximum principle
(both applied to each scalar equation in
system \eqref{eq.systemperu}), we conclude that
\begin{equation} \label{eq.Ustrictlypositive}
U = (u_1,\ldots,u_m) > 0 \qquad \text{in $\Omega\setminus\{0\}$}.
\end{equation}
We then observe that, since $\lambda$ is \emph{strictly positive}, 
the reflection of $\partial\Sigma_\lambda\cap\partial\Omega$
with respect to the hyperplane $T_\lambda$ is entirely contained
in $\Omega$ (remind the structural assumptions satisfied by $\Omega$).
As a consequence, by~\eqref{eq.UvanishesdeOmega} and
\eqref{eq.Ustrictlypositive}, we derive that
\begin{equation} \label{eq.Vlambdasulbordogenerale}
 \text{$V_\lambda \geq 0$ on $\partial\SlPu$\,\,but\,\,$V_\lambda\not\equiv 0$ on the same set}.
\end{equation}
We explicitly point out that, if $\lambda\neq 1/2$, 
we have that~$0_\lambda\notin\partial\Sigma_\lambda$. \medskip

\noindent Gathering all these facts, we are thus dealing with the following system:						
\begin{equation}\label{SysUV}
\left\{ \begin{array}{rll}
            (-\Delta+\alpha_i) v_{i}^{(\lambda)} = v_{i+1}^{(\lambda)}, 
            &\textrm{in } \SlPu, & i=1,\ldots,m-1,\\[0.15cm]
            (-\Delta+\alpha_m) v_{m}^{(\lambda)} = c(x,\lambda) v_{1}^{(\lambda)}, & \textrm{in } \SlPu,\\[0.15cm]
			v_{i}^{(\lambda)} \geq 0, v_{i}^{(\lambda)} \not\equiv 0, 
			& \textrm{on } \partial \SlPu, &i=1,\ldots,m.
				\end{array}\right.			
\end{equation}
Furthermore, since
$u_1,\ldots,u_m$ are non-negative and continuous
on $\overline{\Omega}\setminus\{0\}$ and,
for every fixed $\lambda \in (0,1)$, the set
$\overline{\Sigma_\lambda}$ 
is compactly contained in $\overline{\Omega}\setminus\{0\}$, we have
\begin{equation} \label{eq.vilambdabd}
 \inf_{\SlPu}v^{(\lambda)}_i 
\geq \inf_{\SlPu}(-u_i) > -\infty, \quad
\text{for every $i = 1,\ldots,m$}.
\end{equation}
It is immediate to check that the system in~\eqref{SysUV} is (weakly) cooperative and satisfies 
the assumptions of Lemma~\ref{MaxSmall}.
This implies that, for $\lambda$ very close to 1,
$\Vla \geq 0$ in $\Sl$
(note that, if $\lambda \sim 1$, then $0_\lambda\notin\OmCh$).
Moreover, by the strong maximum principle
applied to the each scalar equation of \eqref{SysUV}, we have
\begin{equation} \label{1Ineq}
\Vla > 0 \quad \textrm{in } \Sl.
\end{equation}
We can then define 
$$
\mathcal{I} := \left\{\lambda\in(0,1)\,:\, V_t > 0 \mbox{ in } \Sigma_t\setminus\{0_t\}\,\,
\forall\,t\in (\lambda,1)\right\} \quad
\text{and} \quad \mu := \inf\mathcal{I}.
$$
We list below some useful properties of $\mu$. \medskip
 \begin{itemize}
  \item[(i)] $\mu\in [0,1)$. 
  This is a straightforward consequence of~\eqref{1Ineq}. \medskip
  \item[(ii)] $V_t > 0$ on $\Sigma_t\setminus\{0_t\}$ for every $t \in (\mu,1)$. Indeed,
  let $t\in (\mu,1)$ be fixed. Since $\mu = \inf\mathcal{I}$, it is possible to find
 $\lambda\in\mathcal{I}$ such that
 $\mu<\lambda< t$.
 As a consequence, since $\lambda\in\mathcal{I}$, we conclude that
 $V_t > 0$ on $\Sigma_t\setminus\{0_t\}$, as claimed. \medskip
 
 \item[(iii)] $V_\mu \geq 0$ on $\Sigma_\mu\setminus\{0_\mu\}$. Indeed,
 if $\xi \in \Sigma_\mu\setminus\{0_\mu\}$ is fixed, there exists
 a small $\epsilon_0 > 0$ such that
 $\xi\in\Sigma_{\mu+\epsilon}\setminus\{0_{\mu+\epsilon}\}$
 for every $\epsilon\in [0,\epsilon_0)$. Thus,
 by (ii) we have
 $$V_{\mu+\epsilon}(\xi) > 0 \quad \text{for every $\epsilon\in [0,\epsilon_0)$}.$$
 Taking the limit as $\epsilon\to 0^+$, we conclude that
 $V_\mu(\xi) \geq 0$, as claimed. \medskip
 
 \item[(iv)] If $\mu > 0$, then $V_\mu > 0$ on $\Sigma_\mu\setminus\{0_\mu\}$. Indeed,
 by (iii) we know that $V_\mu\geq 0$ on its domain. 
 Moreover, 
 since $V_\lambda$ solves~\eqref{SysUV} and~$c(x;\lambda)\geq 0$, we have
 $$\text{$(-\Delta +\alpha_i) v^{(\mu)}_i \geq 0$ on $\Sigma_\mu\setminus\{0_\mu\}$}
 \qquad (\text{for every $i = 1,\ldots,m$}.$$
 On the other hand, if $\mu > 0$, using again~\eqref{SysUV}, we see that
 $$\text{$v^{(\mu)}_i\not\equiv 0$ on 
 $\partial\Sigma_\mu\setminus\{0_\mu\}$} \qquad \text{for $i = 1,\ldots,m$}.$$
 Thus, by the strong maximum principle for $(-\Delta +\alpha_i)$
 (and the fact that $\Sigma_\mu\setminus\{0_\mu\}$ is connected), we conclude
 that
 $V_\mu > 0$ on $\Sigma_\mu\setminus\{0_\mu\}$, as claimed.
 \end{itemize}
\medskip
\noindent The goal is to show that 
\begin{equation}\label{DM}\mu = 0.
\end{equation} 
Indeed, if this is the case, by the above (iii) we have
(for $x\in\Sigma_0 = \Omega\cap\{x_1 > 0\}$)
$$u(-x_1,x_2,\ldots,x_n) = \Big[U_0(x)\Big]_1 \geq \Big[U(x)\Big]_1 = u(x_1,\ldots,x_n).$$
Thus, by applying this result to the function 
$\Omega\ni x\mapsto w(x) := u(-x_1,\ldots,x_n)$
(which solves the same PDE in~\eqref{PDE4}), we conclude that,
for every $x\in\Omega\cap\{x_1 > 0\}$,
\begin{equation}\label{MM}u(x_1,\ldots,x_n) = w(-x_1,\ldots,x_n)
\geq w(x_1,\ldots,x_n) = u(-x_1,\ldots,x_n),\end{equation}
and this proves that $u$ is symmetric in $x_1$, as desired.\bigskip

\noindent As in \cite{CLN2}, we prove \eqref{DM}
by contradiction considering three possible cases:
$$
\mu \in \left( \tfrac{1}{2},1 \right), \qquad
\mu = \tfrac{1}{2}, \qquad
\mu \in \left(0, \tfrac{1}{2}\right).$$
\indent {\bf Case I: $\mu \in \left( \tfrac{1}{2},1 \right)$}. \\
We note that in this case $0_{\mu} \notin \Sigma_{\mu}$, and therefore
there is no effect of the singularity.
For every fixed $\lambda\in (1/2,\mu)$, by~\eqref{SysUV} we see that
\begin{equation*} 
\left\{ \begin{array}{rl}
(\mathcal{L}_m +A(x)) \Vla = 0, & \textrm{in } \Sl,\\[0.15cm]
\Vla \geq 0, & \text{on $\partial \Sl$}, \\[0.15cm]
\Vla \not\equiv 0, & \textrm{on } \partial \Sl.
\end{array}\right. ,
\end{equation*}
where $\mathcal{L}_m$ is as in Lemma~\ref{MaxSmall} and
$A(x)$ is given by
\begin{equation} \label{eq.defiAonceandforall}
 A(x) = \left( \begin{array}{ccccc}
                -\alpha_1 & 1 & 0 & \ldots & 0\\
								0 & -\alpha_2 & 1 & \ldots & 0\\
								\vdots & \vdots & \ddots & \ddots & \vdots\\
								0 & 0 & 0 & -\alpha_{m-1} & 1\\
								c(x) & 0 & 0 & \ldots & -\alpha_m
						\end{array}\right).
\end{equation}
Now, fix a positive constant $\delta>0$ and a compact set $K \subset \Sigma_{\mu}$, such that
$$|\Sigma_{\mu} \setminus K| < \tfrac{\delta}{2}.$$
By definition of $\mu$, and since $K \subset \Sigma_{\mu}$ is compact,
it holds that
$$V_{\mu} > 0, \quad \textrm{in } K.$$
We now claim that there exits $\epsilon_0>0$ so small that for every $\epsilon\in(0,\epsilon_0)$:
\begin{equation}\label{Meas}
|\Sigma_{\mu-\epsilon} \setminus K| < \delta,
\end{equation}
\noindent and 
\begin{equation}\label{VinK}
V_{\mu - \epsilon}>0, \quad  \textrm{in }K.
\end{equation}
While~\eqref{Meas} follows by continuity, the claim in~\eqref{VinK} can be proved as follows: let
$$M_i := \min_{x \in K} \left\{ v_{i}^{(\mu)}(x) \right\} >0, \quad i=1,\ldots,m.$$
By continuity of the $u_i$'s, if $\epsilon_0$ is sufficiently small we have
$$|u_i^{(\mu - \epsilon)}(x)-u_i^{(\mu)}(x)| \leq \tfrac{M_i}{2} \quad \textrm{for every } x \in K.$$
Hence, for $x \in K$,
\begin{equation*}
v_i^{(\mu-\epsilon)}(x)= u_i^{(\mu - \epsilon)}(x) - u_i^{(\mu)}(x)+u_i^{(\mu)}(x)-u_i(x) \geq \tfrac{M_i}{2}>0,
\end{equation*}
\noindent in $K$, which proves~\eqref{VinK}.
In the complement of $K$ in $\Sigma_{\mu-\epsilon}$, we can then 
apply Lemma~\ref{MaxSmall} and the strong maximum principle
(to the each scalar equation of \eqref{SysUV}) getting
$$V_{\mu}> 0, \quad \textrm{in } \Sigma_{\mu-\epsilon},$$
\noindent which contradicts the definition of $\mu$. This excludes the case $\mu \in \left(\tfrac{1}{2},1\right)$.\\

\indent {\bf Case II: $\mu = \tfrac{1}{2}$}. \\
In this case, we are to distinguish two possible sub-cases.
If the $0_{\mu}$ does not lie in $\overline{\Omega}$, then there is no singularity
involved and we can argue exactly as in Case I. 
We have then to rule out just the case $0_{\mu} \in \partial \Omega$. 
Let us consider a positive constant $\delta >0$ small enough.
We define the set $D_{\delta} \subset \Smu$ as
\begin{equation*}
D_{\delta}:= \left\{ y \in \Smu: \mathrm{dist}(y,\partial \Smu) \geq \delta \right\}.
\end{equation*} 
Again, since $D_{\delta} \subset \Smu$, $V_{\mu}>0$ in $D_{\delta}$.
We now define the set $A_{\delta} \subset \overline{\Omega}$ as
\begin{equation*}
A_{\delta}:= \left\{ z \in \overline{\Omega}: |z-0_{\mu}| = \tfrac{\delta}{2} \right\}.
\end{equation*}
We note that $U$ is strictly positive on the ball $B_{\tfrac{\delta}{2}}(0)$
that surrounds the singularity. Therefore, $V_{\mu}>0$ on $A_{\delta}$.
This implies that
$$V_{\mu} >0, \quad \textrm{in } D_{\delta}\cup A_{\delta}.$$
We also note that $D_{\delta}\cap A_{\delta} = \emptyset$.
Arguing as in the proof of~\eqref{VinK},
 we can show that there exists $\epsilon_0
 = \epsilon_0(\delta) >0$ such that, for any $\epsilon\in [0,\epsilon_0]$, 
 we have $0_{\mu-\epsilon}\in \Omega\cap B_{\delta/2}(0_\mu)$ and
\begin{equation}\label{VinD}
V_{\mu-\epsilon} > 0, \quad \textrm{in } D_\delta \cup A_{\delta},
\end{equation}
\noindent for every $\epsilon \in (0,\epsilon_0)$. Moreover,
$$V_{\mu-\epsilon} \geq 0, \quad \textrm{on } \partial \left( \Sigma_{\mu-\epsilon} \setminus \left(D_{\delta}\cup B_{\tfrac{\delta}{2}}(0_{\mu}) \right) \right) \setminus T_{\mu-\epsilon},$$
\noindent but $V_{\mu-\epsilon} \not\equiv 0$ there. Therefore, for $\delta$ and $\epsilon$ small enough,
we can apply Lemma~\ref{MaxSmall} and the strong maximum principle 
(to the each scalar equation of \eqref{SysUV}) getting
$$V_{\mu-\epsilon} > 0, \quad \textrm{in } \Sigma_{\mu-\epsilon} \setminus \left( D_{\delta}\cup B_{\tfrac{\delta}{2}}(0_{\mu}) \right),$$
\noindent that, combined with~\eqref{VinD}, gives
$$V_{\mu-\epsilon} >0, \quad \textrm{in } \Sigma_{\mu-\epsilon}\setminus B_{\tfrac{\delta}{2}}(0_{\mu}).$$
To complete this second case we have to show that,
 setting $G := \Omega\cap B_{\delta/2}(0_\mu)$, we have
 $V_{\mu-\epsilon} \geq 0$ on $G\setminus\{0_{\mu-\epsilon}\}$.
 To this end, it suffices to apply Lemma~\ref{PerG} 
 with $A(x)$ as in~\eqref{eq.defiAonceandforall}: indeed, $G$ is clearly contained
 in $B_\delta(0_\mu)$; moreover, $V_{\mu-\epsilon}\geq 0$ on $\partial G$ 
 and, by \eqref{eq.vilambdabd},
 $v^{(\mu-\epsilon)}_i$ is bounded below
 on $G$ for every $i = 1,\ldots,m$.
 
 \noindent This proves that even the case $\mu =\tfrac{1}{2}$ is not possible. \medskip

 \indent {\bf Case III: $\mu \in \left(0, \tfrac{1}{2}\right)$}. 

\noindent To prove that also this case is not possible, we argue essentially as in
 {Case II}.
 First of all, given any $\delta > 0$ such that
 $\mathrm{dist}(0_\mu,\partial\Smu) > \delta$, we define
 $$D_{\delta} := \{ y \in \Smu : \mathrm{dist}(y,\partial\Smu)\geq \delta\}, \qquad
 K_\delta := D_\delta\setminus B_{\frac{\delta}{2}}(0_\mu)\subseteq\Smu.$$
 Since $K_\delta$ is compact and, by (iv), the function $V_\mu$
 is continuous and strictly positive on $K_\delta\subseteq\Smu\setminus\{0_\mu\}$, 
 it is possible to find
 $m_0 > 0$ such that
 $$v_i^{(\mu)} \geq m_0 \quad \text{on $K_\delta$} \qquad\quad \big(\text{for every $i = 1,\ldots,m$}\big).$$
 {F}rom this, by arguing as in {Case II}, we infer the existence of a small
 $\epsilon_0  =
 \epsilon_0(\delta) > 0$ such that, for every $\epsilon\in [0,\epsilon_0]$,
 the point
 $0_{\mu-\epsilon}$ lies in $D_\delta$ and
 \begin{equation} \label{eq.touseVmuepositive}
  \text{$V_{\mu-\epsilon} > 0$ on $K_\delta$}.
  \end{equation}
  By combining~\eqref{eq.touseVmuepositive}
  with~\eqref{eq.Vlambdasulbordogenerale},
 we infer that 
 $$\text{$V_{\mu-\epsilon} \geq 0$\,\,on 
 $\partial\Big(\Sigma_{\mu-\epsilon}\setminus D_\delta\Big)$ but 
 $V_{\mu-\epsilon} \not\equiv 0$ on the same set,}$$
 for every $\epsilon\in [0,\epsilon_0]$. 
 As a consequence, since $V_{\mu-\epsilon}$ solves the system of PDEs
 $$\text{$(\mathcal{L}_m + A(x))\,V_{\mu-\epsilon} = 0$ 
 on $\Sigma_{\mu-\epsilon}\setminus\{0_{\mu-\epsilon}\}$},$$
 (where $\mathcal{L}_m$ and $A(x)$ are as in Case I), if $\delta$ 
 has been chosen in such a way that $|\Sigma_{\mu-\epsilon_0}\setminus D_\delta|$ is small, we can
 apply Lemma~\ref{MaxSmall} and the strong maximum principle
 (again to the each scalar equation of \eqref{SysUV}), obtaining
 \begin{equation} \label{eq.touseVmuepositiveBIS}
  \text{$V_{\mu-\epsilon} > 0$\,\,on $\Sigma_{\mu-\epsilon}\setminus D_\delta$} \qquad
  \text{for every $\epsilon\in [0,\epsilon_0]$}.
 \end{equation}
 Gathering together~\eqref{eq.touseVmuepositiveBIS} and~\eqref{eq.touseVmuepositive} we conclude that,
 for every $0\leq\epsilon\leq\epsilon_0$,
 $$V_{\mu-\epsilon} > 0 \quad 
 \text{on $\Sigma_{\mu-\epsilon}\setminus B_{\frac{\delta}{2}}(0_\mu)$}.$$
 We now turn to prove that $V_{\mu-\epsilon} \geq 0$ throughout 
 $B_{\delta/2}(0_\mu)$ (for every fixed $\epsilon\in [0,\epsilon_0]$); 
 this will give a contradiction for the definition of $\mu$
 and will prove that $\mu = 0$, as desired.
  To this end, we argue as in the previous case: indeed, it suffices to apply once again Lemma~\ref{PerG}, 
 with $A(x)$ as in~\eqref{eq.defiAonceandforall}, on $B_{\delta/2}(0_\mu)$. 
The proof of~\eqref{DM} is therefore complete.

\noindent This, in the light of~\eqref{MM} establishes the first claim
in Theorem~\ref{thm.main}.\end{proof}

\begin{proof}[Proof of Theorem~\ref{thm.main} - (2)] 
We now complete the proof of Theorem~\ref{thm.main}
 by showing the monotonicity of $u$ with respect
 to $x_1$ (if $x_1 > 0$).
 First of all, by~\eqref{DM}, for every fixed $\lambda\in (0,1)$ we have
 that $V_\lambda > 0$ on $\Sigma_\lambda$.  
 Hence, in particular, 
 $$\text{$v^{(\lambda)}_1 > 0$ on $\SlPu$\quad and\,\,
 $(-\Delta + \alpha_1) v^{(\lambda)}_1 \stackrel{\eqref{SysUV}}{=} v^{(\lambda)}_2 > 0$ on 
 $\Sigma_{\lambda}\setminus\{0_\lambda\}$}.$$
 On the other hand, since (by definition) $v^{(\lambda)}_1 \equiv 0$ on $T_\lambda$, we
 are entitled to apply the classical Hopf's Lemma 
 (see, e.g., \cite[Lemma 3.4]{GT}), which gives that
 $$0 < \frac{\partial v_1^{(\lambda)}}{\partial x_1}(x) =  
 -2\,\frac{\partial u}{\partial x_1}(x)
 \quad \text{for every $x\in T_\lambda$}.$$
 This ends the proof of Theorem~\ref{thm.main}.\end{proof}

\begin{appendix}

\section{A combinatorics observation}

This appendix considers the setting in~\eqref{8A} and~\eqref{8B},
and checks the following combi\-na\-to\-rics remark:

\begin{lemma}\label{8}
We have that~$\alpha_1,\dots,\alpha_m\ge0$ if and only if~$s_0(\alpha),\dots,s_m(\alpha)\ge0$
\end{lemma}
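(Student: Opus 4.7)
The plan is to read off the identity in~\eqref{8A} as the factorization
$$P(t) := \prod_{i=1}^m (t+\alpha_i) = \sum_{k=0}^m s_k(\alpha)\,t^k,$$
and then exploit this factorization in each direction separately. Note that $s_m(\alpha)=1\ge0$ automatically, so only $s_0,\dots,s_{m-1}$ carry information.

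For the forward implication, I would argue directly from the closed form in~\eqref{8B}: if every $\alpha_i\ge 0$, then each $s_k(\alpha)$ is a sum of products of non-negative real numbers, hence $s_k(\alpha)\ge 0$ for every $k\in\{0,\dots,m\}$. No further work is needed here.

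For the reverse implication, assume that $s_0(\alpha),\dots,s_m(\alpha)\ge 0$. Then, for every $t\ge 0$, all the summands $s_k(\alpha)\,t^k$ are non-negative, and the leading term $s_m(\alpha)\,t^m = t^m$ is strictly positive for $t>0$; therefore $P(t)>0$ for every $t>0$, and $P(0)=s_0(\alpha)\ge 0$. On the other hand, from the factorization one has $P(-\alpha_i)=0$ for each $i\in\{1,\dots,m\}$. Combining the two facts, each root $-\alpha_i$ must satisfy $-\alpha_i\le 0$, i.e.\ $\alpha_i\ge 0$. This is precisely the content of the Descartes rule of signs invoked in the paper, but phrased in this very elementary way one avoids quoting it.

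There is essentially no hard step: the single point that needs a line of care is the observation that \emph{every} real root of $P$ must be non-positive, which follows at once from the strict positivity of $P$ on $(0,+\infty)$ granted by $s_m(\alpha)=1$. Thus the lemma is established.
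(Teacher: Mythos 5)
Your proof is correct, and it takes a genuinely different and considerably shorter route than the paper. The paper proves the reverse implication by a double induction: an outer induction on $m$ (first under the extra hypothesis that all $\alpha_i\neq 0$, removed afterwards by factoring out a power of $t$), with an inner backward induction on $k$ establishing the recursion $s_{k-1}(\alpha')=s_k(\alpha)-\alpha_m\,s_k(\alpha')$ and showing $s_k(\alpha')\ge 0$ for $\alpha'=(\alpha_1,\dots,\alpha_{m-1})$, leading to a contradiction via $s_0(\alpha)=\alpha_1\cdots\alpha_m$. You instead read off the definition~\eqref{8A} as the factorization $P(t)=\prod_{i=1}^m(t+\alpha_i)=\sum_{k=0}^m s_k(\alpha)t^k$, note that non-negative coefficients together with $s_m(\alpha)=1$ force $P(t)\ge t^m>0$ on $(0,\infty)$, and conclude from $P(-\alpha_i)=0$ that every $-\alpha_i\le 0$. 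This is essentially the one-line ``polynomial with non-negative coefficients and positive leading term has no positive real root'' argument (a degenerate case of Descartes' rule, as you observe), and it handles the zero-$\alpha_i$ case with no extra work, which the paper's induction cannot and must patch separately. The trade-off is that the paper's proof is purely combinatorial, manipulating the elementary symmetric polynomials directly and never invoking the roots of $P$; yours is shorter and avoids both layers of induction. Both are complete and correct.
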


\begin{proof}
Obviously, if~$\alpha_1,\dots,\alpha_{m}\ge0$,
we have that~$s_0(\alpha),\dots,s_m(\alpha)\ge0$, due to~\eqref{8B}.

Now, we prove that 
\begin{equation}\label{COMB}
{\mbox{if~$s_0(\alpha),\dots,s_{m-1}(\alpha)\ge0$
then necessarily~$\alpha_1,\dots,\alpha_m\ge0$.}}\end{equation}
To this end, we first prove~\eqref{COMB} under the additional assumption that
\begin{equation}\label{COMB0}
{\mbox{$\alpha_i\ne0$ for all $i\in\{1,\dots,m\}$.}}\end{equation}
To prove~\eqref{COMB}
under the additional assumption in~\eqref{COMB0}, we argue by induction over~$m$.
If~$m=1$, we have that~$s_0(\alpha)=\alpha_1$, thus the claim is obvious. 

Hence, to perform the inductive step,
we suppose now that
\begin{equation}\label{INAm78A}
{\mbox{\eqref{COMB} holds true for~$m$
replaced by~$m-1$,}}\end{equation} and we establish~\eqref{COMB}
for~$m\ge2$.

Up to reordering coordinates, we may suppose that
\begin{equation}\label{PALFA}
\alpha_1\ge\dots\ge\alpha_m.\end{equation}
Therefore, if~$\alpha_m\ge0$ we are done, and we can accordingly
assume also that
\begin{equation}\label{78HA89A}
\alpha_m<0.\end{equation}
We also set~$\alpha':=(\alpha_1,\dots,\alpha_{m-1})\in\R^{m-1}$.
We recall~\eqref{8B} and observe that, for every~$k\in\{1,\dots,m-1\}$,
\begin{eqnarray*}
s_k(\alpha)&=&\sum_{1\le i_1<\dots<i_{m-k}\le m-1}\alpha_{i_1}\dots\alpha_{i_{m-k}}+
\sum_{1\le i_1<\dots<i_{m-1-k}\le m-1}\alpha_{i_1}\dots\alpha_{i_{m-1-k}}\alpha_m
\\&=&
s_{k-1}(\alpha')+\alpha_m\,s_k(\alpha'),
\end{eqnarray*}
with the notation that the sum over the void set of indexes equals to~$1$. 
Consequently, for every~$k\in\{1,\dots,m-1\}$,
\begin{equation}\label{QU3A}
s_{k-1}(\alpha')=s_k(\alpha)-\alpha_m\,s_k(\alpha').
\end{equation}
We claim that
\begin{equation}\label{QU3A2}
s_k(\alpha')\ge0, \qquad{\mbox{ for all }}
k\in\{0,\dots,m-1\}.
\end{equation}
To prove this, we argue by backward induction over~$k$. First of all, we know that~$s_{m-1}(\alpha')=1$.
Then, suppose that~$s_j(\alpha')\ge0$ for all~$j\in\{k,\dots,m-1\}$, for some~$k\in\{1,\dots,m-1\}$.
Hence, by~\eqref{78HA89A},
$$ \alpha_m\,s_k(\alpha')\le0$$
and therefore, by~\eqref{QU3A},
$$ s_{k-1}(\alpha')=s_k(\alpha)-\alpha_m\,s_k(\alpha')\ge s_k(\alpha)\ge0.$$
This completes the inductive step and it proves~\eqref{QU3A2}.

Then, by~\eqref{QU3A2}, we are in the position of using~\eqref{INAm78A}
and deduce that~$\alpha_1,\dots,\alpha_{m-1}\ge0$. Hence, by~\eqref{COMB0},
we have that~$\alpha_1,\dots,\alpha_{m-1}>0$.
But then
$$ 0\le s_0(\alpha)=\alpha_1\dots\alpha_m<0,$$
which is a contradiction. This shows that~\eqref{78HA89A} cannot hold, and so, by~\eqref{PALFA},
$$ \alpha_1\ge\dots\ge\alpha_m\ge0,$$
and thus~\eqref{COMB} is established, under assumption~\eqref{COMB0}.

To remove the additional assumption in~\eqref{COMB0}, let now assume that~$\alpha_{m-\ell+1}=\dots=\alpha_m=0$, with~$\alpha_i\ne0$ for all~$i\in\{1,\dots, m-\ell\}$, for some~$\ell\in\{0,\dots,m\}$.
Then, letting~$\alpha^*:=(\alpha_1,\dots,\alpha_{m-\ell})$,
we have that
\begin{eqnarray*}
\sum_{k=0}^m s_k(\alpha)\,t^k&=&
\det\left( \begin{matrix}\alpha_1+t & 0 &\dots &0\\
0& \alpha_2+t&\dots&0\\
\vdots& \vdots& \ddots&\vdots\\
0&0&\dots&\alpha_m+t
\end{matrix}
\right)\\&=&
t^\ell
\det\left( \begin{matrix}\alpha_1+t & 0 &\dots &0\\
0& \alpha_2+t&\dots&0\\
\vdots& \vdots& \ddots&\vdots\\
0&0&\dots&\alpha_{m-\ell}+t
\end{matrix}
\right) \\&=& t^\ell
\sum_{k=0}^{m-\ell} s_k(\alpha^*)\,t^k.\end{eqnarray*}
This gives that
$$ 0\le s_{k+\ell}(\alpha) = s_k(\alpha^*),$$
for all~$k\in\{0,\dots,m-\ell\}$.

Since we have already proved~\eqref{COMB} is established under assumption~\eqref{COMB0}, we deduce that~$
\alpha_1,\dots,\alpha_{m-\ell}\ge0$,
which completes the proof of~\eqref{COMB}.
\end{proof}

\end{appendix}




\begin{thebibliography}{99}

\bibitem{AG}
{\sc D.H. Armitage, S.J. Gardiner}, 
{\em Classical potential theory}, 
Springer-Verlag London, Ltd., London, 2001.

\bibitem{BGW08}
{\sc E. Berchio, F. Gazzola and T. Weth}, 
{\em Radial symmetry of positive solutions to nonlinear polyharmonic Dirichlet problems}, 
J. Reine Angew. Math. {\bf 620}, (2008), 165--183.

\bibitem{BN}
{\sc H. Berestycki and L. Nirenberg},
{\em On the method of moving planes and the sliding method},
Bol. Soc. Brasil. Mat. (N.S.) {\bf 22}, (1991), 1--37. 

\bibitem{CLN2}
{\sc L. Caffarelli, Y.Y. Li and L. Nirenberg}, 
{\em Some remarks on singular solutions of nonlinear elliptic equations. II. Symmetry and monotonicity via moving planes}, 
Advances in geometric analysis, Int. Press, Somerville, MA {\bf 21}, (2012), 97--105.

\bibitem{CH}
{\sc J. W. Cahn and J. E. Hilliard},
{\em Free energy of a nonuniform system. I. Interfacial free energy},
J. Chem. Phys. {\bf 28}, (1958), 258--267.


\bibitem{CoVe}
{\sc F. Colasuonno and E. Vecchi},
{\em Symmetry in the composite plate problem},
to appear in Commun. Contemp. Math., (2018), doi: 10.1142/S0219199718500190.

\bibitem{CoVe2}
{\sc F. Colasuonno and E. Vecchi},
{\em Symmetry and rigidity in the hinged composite plate problem},
submitted.

\bibitem{DP}
{\sc L. Damascelli and F. Pacella},
{\em Symmetry results for cooperative elliptic systems via linearization},
SIAM J. Math. Anal.  {\bf 45}, (2013), 1003--1026.
	
\bibitem{deFigueiredo}
{\sc D. G. De Figueiredo}, 
{\em Monotonicity and symmetry of solutions of elliptic systems in general domains},
NoDEA Nonlinear Differential Equations Appl. {\bf 1}, (1994), 119--123.

\bibitem{EFS18}
{\sc F. Esposito, A. Farina and B. Sciunzi}, 
{\em Qualitative properties of singular solutions to semilinear elliptic problems}, 
J. Differential Equations {\bf 265}, (2018), 1962--1983.
 	
\bibitem{FGW}
{\sc A. Ferrero, F. Gazzola and T. Weth}, 
{\em Positivity, symmetry and uniqueness for minimizers of second-order Sobolev inequalities}, 
Ann. Mat. Pura Appl. {\bf 186}(4), (2007), 565--578.

\bibitem{GGS} 
{\sc F. Gazzola, H.-C. Grunau and G. Sweers}, 
{\em Polyharmonic boundary value problems}, 
Springer-Verlag, Berlin, 2010, 1991, pages xviii+423.

\bibitem{GNN}
{\sc B. Gidas, B, W.M. Ni and L. Nirenberg}, 
{\em Symmetry and related properties via the maximum principle}, 
Comm. Math. Phys. {\bf 68}, (1979), 209--243.

\bibitem{GT} 
{\sc D. Gilbarg and N.S. Trudinger}, 
{\em Elliptic partial differential equations of second order}, 
Springer-Verlag, Berlin, 2001, pages xiv+517.


\bibitem{MPS17}
{\sc L. Montoro, F. Punzo and B. Sciunzi}, 
{\em Qualitative properties of singular solutions to nonlocal problems}, 
Ann. Mat. Pura Appl. {\bf 197}(4), (2018), 941--964.

\bibitem{PIZ}
{\sc P. Pizzetti},
{\em Sulla media dei valori che una funzione dei punti dello spazio assume alla superficie di una sfera},
Rendiconti Lincei {\bf 18} (1909),
182--185.


\bibitem{Sciunzi17}
{\sc B. Sciunzi}, 
{\em On the moving plane method for singular solutions to semilinear elliptic equations}, 
J. Math. Pures Appl. {\bf 108}(9), (2017), 111--123.

\bibitem{Serrin}
{\sc J. Serrin}, 
{\em A symmetry problem in potential theory}, 
Arch. Rational Mech. Anal. {\bf 43}, (1971), 304--318.


\bibitem{Terracini96}
{\sc S. Terracini}, 
{\em On positive entire solutions to a class of equations with a singular coefficient and critical exponent}, 
Adv. Differential Equations {\bf 1}, (1996), 241--264.

\bibitem{Troy81} 
{\sc W.C. Troy}, 
{\em Symmetry properties in systems of semilinear elliptic equations}, 
J. Differential Equations {\bf 42}, (1981), 400--413.

\end{thebibliography}
\end{document}